\documentclass[11pt,twoside]{amsart}
\usepackage{amssymb}
\usepackage{amsmath}
\usepackage{graphicx,color}

\setlength{\textheight}{8.5truein}
\setlength{\textwidth}{6.5truein}
\setlength{\evensidemargin}{0truein}
\setlength{\oddsidemargin}{0truein}
\setlength{\topmargin}{0truein}

\newtheorem{thm}{Theorem}[section]

\newtheorem{lem}[thm]{Lemma}

\newtheorem{rem}[thm]{Remark}

\newcommand{\skipit}[1]{{}}
\newcommand{\prfend}{\hbox to7pt{\hfil}
\par\vskip-\baselineskip\hbox to\hsize
{\hfil\vbox {\hrule width6pt height6pt}}\vskip\baselineskip}

\newcommand {\PP}{\mathbb{P}}

\newcommand{\cL}{\mathcal{L}}

\newcommand{\K}{\mathbb{K}}

\newcommand{\myarrow}[2]{\hbox to #1pt{\hfil$\to$\hfil}{\hskip-#1pt{\raise
10pt\hbox to#1pt{\hfil$\scriptscriptstyle #2$\hfil}}}}

\begin{document}

\title{Lefschetz property and powers of linear forms in $\K[x,y,z]$}
\author[Charles Almeida]{Charles Almeida}
\address{Instituto de Matem\'atica, Estat\'istica e Computa\c{c}\~{a}o Cient\'ifica - UNICAMP, Rua S\'ergio Buarque de Holanda 651, Distr. Bar\~ao Geraldo, CEP 13083-859, Campinas (SP), Brasil}
\email{charlesalmd@gmail.com}

\author[Aline V. Andrade]{Aline V. Andrade}
\address{Instituto de Matem\'atica, Estat\'istica e Computa\c{c}\~{a}o Cient\'ifica - UNICAMP, Rua S\'ergio Buarque de Holanda 651, Distr. Bar\~ao Geraldo, CEP 13083-859, Campinas (SP), Brasil}
\email{aline.vandrade88@gmail.com}

\begin{abstract} In \cite{MMN-2012}, Migliore, Mir\'o-Roig and Nagel, proved that if $R = \K[x,y,z]$, where $\K$ is a field of characteristic zero, and $I=(L_1^{a_1},\dots,L_r^{a_4})$ is an ideal generated by powers of 4 general linear forms, then the multiplication by the square $L^2$  of a general linear form $L$  induces an homomorphism of  maximal rank in any graded component of $R/I$. More recently, Migliore and Mir\'o-Roig proved in \cite{MM-2016} that the  same is true for any number of general linear forms, as long the  powers are uniform. In addition, they conjecture that the same holds for arbitrary powers. In this paper we will solve this conjecture and  we will prove that if $I=(L_1^{a_1},\dots,L_r^{a_r})$ is an ideal of $R$ generated by arbitrary powers of any set of general  linear forms, then the multiplication by the square $L^2$ of a general linear form $L$ induces an homomorphism of  maximal rank in any graded component of $R/I$.

\end{abstract}

\thanks{   Almeida  was supported by FAPESP process numbers 2016/14376-0 and 2014/08306-4 and Andrade  was  supported by CAPES process number 99999.000282/2016-02.
\\ {\it Key words and phrases.} strong Lefschetz
property, weak Lefschetz
property, power linear forms.
\\ {\it 2010 Mathematic Subject Classification.} 14C20, 13D40}

\maketitle

\tableofcontents

\markboth{Charles Almeida, Aline V. Andrade}{Lefschetz property and powers of linear forms in $\K[x,y,z]$}

\today

\large

\section{Introduction}

Let $A = R/I$ be a artinian standard graded algebra, with $R = \K[x_1,\cdots,x_n]$ where $\K$ is a  field of characteristic zero. It is an important question to determine whether A has the {\it Strong Lefschetz Property} (SLP), that is, when the homomorphism induced by the multiplication map $\times L^k: A_{j-k} \rightarrow A_j$, of a general linear form $L$, has maximal rank in all degrees, or the {\it Weak Lefschetz Property} (WLP) that is, when the  multiplication map $\times L: A_{j-1} \rightarrow A_j$, of a general linear form $L$, has maximal rank in all degrees.
At first glance this might seem to be a simple problem in linear algebra, but instead it has proven to be extremely hard even in the case of very natural families of artinian graded algebras. There is a huge literature in this subject and the problems has been solved from different points of view applying tools of representation theory, vector bundles, differential geometry, among others (See \cite{MeM-2016}, \cite{MMO-2013} and \cite{MMi-2016}).

In this paper we deal with ideals $I \subset \mathbb{K}[x,y,z]$ generated by powers of linear forms which provides a great number of examples in which $A$ has $SLP$ (see for instance \cite{Stan} and \cite{Wat}) or $WLP$ (see for instance \cite{Sc-Se2010}). But, there is also a great number of examples in which $A$ fails to have $WLP$ and $SLP$ (see for instance \cite{MMN-2012}). In this direction, H. Schenck and A. Seceleanu proved that if $n = 3$, and $I$ an  ideal generated by powers of general linear forms, the algebra $A$ has $WLP$. Later, Migliore, Mir\'o-Roig and Nagel, proved that in the same ring, if $I \subset \mathbb{K}[x,y,z]$ is generated by powers of at most $4$ linear forms, then the multiplication by $L^2$, where $L$ is an general linear form has maximal degree in all rank (See \cite{MMN-2012}).

In a recent paper, Migliore and Mir\'o-Roig, proved that if $I \subset \mathbb{K}[x,y,z]$ is an ideal generated by uniform powers of any number of  linear forms, then  the multiplication by $L^2$ where $L$ is a general linear form, has maximal rank in all degrees. In such paper, they conjecture that the result is true even if the powers are not uniform (\cite{MM-2016}, Conjecture 4.5). Our goal is to show this conjecture, that is to prove

\begin{thm}\footnote{It came to our knowledge that this theorem was also proved independently by U. Nagel and J. Migliore.} For any artinian quotient of $\K[x,y,z]$ generated by powers of general linear forms, and for a general linear form $L$, the multiplication by $L^2$ has maximal rank in all degrees.
\end{thm}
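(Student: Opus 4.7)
The plan is to translate the statement into the language of fat points in $\mathbb{P}^2$ via Macaulay duality, to reduce to a narrow set of ``turnaround'' degrees using the already-known WLP, and then to induct on the number $r$ of generators. For the first step, let $P_i\in\mathbb{P}^2$ be the point dual to $L_i$ and $Q$ the point dual to $L$, and for each $j$ set $Z_j=\sum_i \max(0,\,j-a_i+1)\,P_i$. By the Emsalem--Iarrobino theorem, $\dim [R/I]_j=\dim (I_{Z_j})_j$, and the cokernel of $\times L^2$ in degree $j$ has dimension $\dim (I_{Z_j+(j-1)Q})_j$. Hence maximal rank of $\times L^2$ in degree $j$ is equivalent to the geometric statement that the fat point of multiplicity $j-1$ at a general $Q$ imposes exactly $\min(\dim[R/I]_j,\dim[R/I]_{j-2})$ independent conditions on $(I_{Z_j})_j$.

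Next, I would exploit the WLP of $R/I$ established by Schenck--Seceleanu to cut down to the critical degrees. Because $\times L\colon[R/I]_{k-1}\to[R/I]_k$ has maximal rank for every $k$, the Hilbert function $h=h_{R/I}$ is unimodal and completely determined. Factoring $L^2=L\cdot L$ then immediately yields maximal rank of $\times L^2\colon[R/I]_{j-2}\to[R/I]_j$ whenever both factors are injective (i.e.\ $h(j-2)\le h(j-1)\le h(j)$) or both surjective (i.e.\ $h(j-2)\ge h(j-1)\ge h(j)$). Only the ``turnaround'' degrees $j$ with $h(j-2)\le h(j-1)>h(j)$, a narrow window around the peak of $h$, require further argument.

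For these remaining degrees I would argue by induction on $r$, with base case $r\le 4$ supplied by \cite{MMN-2012}. Writing $I=I'+(L_r^{a_r})$ with $I'=(L_1^{a_1},\dots,L_{r-1}^{a_{r-1}})$ yields a short exact sequence $0\to R/(I':L_r^{a_r})(-a_r)\xrightarrow{\cdot L_r^{a_r}} R/I'\to R/I\to 0$. Applying the snake lemma to $\times L^2$ acting vertically on this sequence in degrees $j-2$ and $j$ expresses the kernel and cokernel of $\times L^2$ on $R/I$ in terms of the analogous data for $R/I'$ (controlled by the inductive hypothesis, together with the already-established WLP for $R/I'$) and the Hilbert function of $R/(I':L_r^{a_r})$.

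The main obstacle is precisely this colon ideal: $I':L_r^{a_r}$ is generically not generated by powers of linear forms, so the inductive hypothesis does not apply to it verbatim. I would circumvent this on the geometric side of Step 1: by specializing $Q$ onto a line through two of the $P_i$'s, or by applying a quadratic Cremona transformation based at three of the $P_i$'s, the fat point configuration $Z_j+(j-1)Q$ degenerates to one of strictly smaller total multiplicity, whose imposed conditions can be tracked using the inductive hypothesis and the known (unimodal) Hilbert function. Balancing this count against the identity $\dim(I_{Z_j+(j-1)Q})_j=\max(0,h(j)-h(j-2))$ forced by maximal rank will, I expect, close the induction in the finitely many turnaround degrees.
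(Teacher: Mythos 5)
Your first two reduction steps coincide with the paper's: Emsalem--Iarrobino duality converts the question into whether a general fat point of multiplicity $j-1$ imposes independent conditions on the linear system of the $Z_j$, and the Schenck--Seceleanu WLP together with unimodality reduces everything to the single degree $j=p+1$ where $p$ is the peak of the Hilbert function. That part is sound. The genuine gap is that the entire content of the theorem now lives in that one degree, and you do not prove anything there: your proposal ends with ``I expect \dots will close the induction.'' Moreover, the inductive scheme you choose runs into exactly the obstruction you name. The colon ideal $I':L_r^{a_r}$ is not an ideal of powers of linear forms, so neither the inductive hypothesis nor Emsalem--Iarrobino applies to it, and the snake-lemma bookkeeping needs the rank of $\times L^2$ on $R/(I':L_r^{a_r})$, which you have no handle on. The proposed escape --- specializing $Q$ onto a line through two of the $P_i$, or degenerating via a Cremona map --- is the hard direction of a degeneration argument: semicontinuity only bounds the dimension of the general system \emph{from above} by that of the special one, so you must exhibit a specialization computing the \emph{expected} dimension exactly, and that is precisely the computation you have deferred.

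For comparison, the paper closes the degree-$(p+1)$ case by a direct computation rather than induction: it splits off the lines $\overline{QP_i}$ from the base locus via B\'ezout to evaluate $\dim\mathcal{L}_2(p+1;p,p-a_1+2,\dots,p-a_r+2)=[2b+1-r]_+$ on the cokernel side, and on the other side applies Cremona transformations to put $\mathcal{L}_2(p+1;p-a_i+2,\dots)$ and $\mathcal{L}_2(p-1;p-a_i,\dots)$ into standard position, where De Volder--Laface non-speciality gives the expected dimension; the key arithmetic input is the inequality $\binom{p+1}{2}\ge\sum_i\binom{p-a_i+1}{2}$, verified by noting that $(b,p+1,\dots,p+1)$ maximizes $\sum a_i^2$. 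Crucially, this only works under the hypothesis $a_{m+1}\le(\sum_{i\le m}a_i-m)/(m-1)$ for all $m$; when that fails the peak is governed by a proper subset of the generators, and the paper's induction is not via colon ideals but via the surjections $R/(l_1^{a_1},\dots,l_m^{a_m})\twoheadrightarrow R/(l_1^{a_1},\dots,l_{m+1}^{a_{m+1}})$, adding one generator at a time: surjectivity of $\times L^2$ passes down trivially, and injectivity in the single critical degree $q+1$ uses that $(q+1)$-st powers of general linear forms span $[R]_{q+1}$. Your proposal does not distinguish these two regimes, and both the explicit linear-system computation and the span-of-powers argument are missing; without them the proof is not complete.
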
 

\noindent \underline{Acknowledgments:} This paper was written while we were under supervision of Professor Rosa Maria Mir\'o-Roig  at IMUB University of Barcelona. We would like to thank Professor Rosa Maria Mir\'o-Roig for the close support that she provided and for the several suggestions that helped improve this work. As well we would like to thank her and IMUB for the warm hospitality during our visit. We also would like to thank Darcy Camargo for his valued
help in the proof of inequality (\ref{inequality}).

\section{Preliminaries}

In this section we fix the notation and state the results that we will need to prove the main result of this short note (Theorem 1.1). In this paper, we define $R = \K[x,y,z]$ where $\K$ is a field of characteristic zero. If $a \in \mathbb{R}$, we will denote by $[a]_{+} = \max\{0,a\}$, and use the convention that a binomial ${a \choose b}$ is zero if $a < b$.

For any Artinian ideal $I \subset R$ and any general linear form $L \in R$, we have the exact sequence of $\K$-vector spaces:
\[
\cdots \longrightarrow  [R/I]_{m-2} \stackrel{\times L^2}{\longrightarrow}  [R/I]_{m} \longrightarrow [R/(I,L^2)]_{m} \longrightarrow 0.
\]
Therefore, the morphism  $\times L^2$ has maximal rank in degree $m$ if, and only if:

\[ \dim_{\K}[R/(I,L^2)]_m = [dim_{\K}[R/I]_m - dim_{\K}[R/I]_{m-2}]_+ \]

To compute such dimensions,  we will strongly use the following result from Ensalem and Iarrobino (\cite{En-Ia1995}; Theorem 2):
\begin{thm}\label{Duality}

Let $( L_1^{a_1}, \cdots, L_n^{a_r}) \subset R$ be an ideal generated by powers of $n$ linear forms. Let $\mathfrak{p}_1, \cdots,\mathfrak{p}_r $ be the ideals of $r$ points in $\mathbb{P}^{n-1}$(Each point is actually obtained explicitly from the corresponding linear form by duality). Choose positive integers $a_1,\cdots, a_r$. Then, for any integer $j \geq$ $ \max\{a_i\}$, one has
$$[\dim_{\K}[R/( L_1^{a_1}, \cdots, L_r^{a_r})]_{j}  = \dim_{\K}[\mathfrak{p}_1^{j-a_1+1} \cap  \cdots \cap \mathfrak{p}_r^{j-a_r+1}]_j .$$
\end{thm}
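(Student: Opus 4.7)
The plan is to use the exact sequence displayed in the Preliminaries to reduce Theorem~1.1 to verifying, for every $m$, the equality
\[
\dim_\K[R/(I, L^2)]_m \;=\; \bigl[\dim_\K[R/I]_m - \dim_\K[R/I]_{m-2}\bigr]_+.
\]
Apply Theorem~\ref{Duality} to each of the three Hilbert-function values on both sides. Writing $P_0, P_1, \ldots, P_r \in \PP^2$ for the general points dual to $L, L_1, \ldots, L_r$, with ideals $\mathfrak{p}_0, \mathfrak{p}_1, \ldots, \mathfrak{p}_r$, and setting $\mu_i := m - a_i + 1$, the theorem becomes the identity
\[
\dim_\K \Bigl[\mathfrak{p}_0^{\,m-1} \cap \bigcap_{i=1}^r \mathfrak{p}_i^{\mu_i}\Bigr]_m \;=\; \Bigl[\dim_\K\Bigl[\bigcap_{i=1}^r \mathfrak{p}_i^{\mu_i}\Bigr]_m - \dim_\K\Bigl[\bigcap_{i=1}^r \mathfrak{p}_i^{\mu_i - 2}\Bigr]_{m-2}\Bigr]_+,
\]
a purely geometric assertion about the Hilbert functions of linear systems of plane curves through general fat points.

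The key observation is that the multiplicity $m-1$ imposed at $P_0$ is only one less than the degree $m$. Choosing coordinates with $P_0 = [0:0:1]$, every $F \in [\mathfrak{p}_0^{m-1}]_m$ decomposes uniquely as
\[
F(x,y,z) \;=\; z \cdot g(x,y) + h(x,y),
\]
with $g, h$ binary forms of degrees $m-1$ and $m$, so $[\mathfrak{p}_0^{m-1}]_m$ is a $(2m+1)$-dimensional ambient space. The fat-point conditions $\mathrm{mult}_{P_i} F \geq \mu_i$ for $i = 1, \ldots, r$ then become $\sum_{i=1}^{r} \binom{\mu_i+1}{2}$ linear conditions on the $(2m+1)$ coefficients of $(g, h)$. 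A parallel bookkeeping applies to the two dimensions on the right of the target identity, neither of which involves $P_0$.

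Assuming the conditions imposed by the $P_i$ are generically independent in each of the three linear systems, the identity reduces to a closed-form combinatorial inequality in the exponents $a_1, \ldots, a_r$, obtained by substituting the expected values $(2m+1) - \sum_i \binom{\mu_i+1}{2}$ (and analogous expressions on the right) and comparing with the $[\,\cdot\,]_+$ truncations where virtual dimensions go negative. I expect this arithmetic to be the technical core of the proof --- likely the inequality for which the authors thank Darcy Camargo --- and expect to verify it by sorting $a_1 \leq \cdots \leq a_r$ and inducting on $r$, with the uniform-power result of \cite{MM-2016} and the $r \leq 4$ cases of \cite{MMN-2012} serving as base cases or benchmarks.

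The main obstacle is controlling the dimensions when the fat-point conditions fail to be linearly independent and an unexpected curve forces a nongeneric drop on one or both sides --- exactly the regime in which the truncation $[\,\cdot\,]_+$ activates on the right-hand side. Here a mere dimension count no longer suffices; instead the plan is to identify the obstructing divisors explicitly as products of the linear forms $L_i$ raised to well-chosen exponents, and to show by Bezout (exploiting genericity of the $L_i$) that these exhaust the source of unexpected sections. Combined with the combinatorial inequality above, this geometric analysis should establish the identity in full generality and hence the theorem.
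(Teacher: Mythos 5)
Your proposal does not prove the statement it was supposed to prove. Theorem \ref{Duality} is the Emsalem--Iarrobino duality: the assertion that, for $j\ge \max\{a_i\}$, the degree-$j$ Hilbert function of $R/(L_1^{a_1},\dots,L_r^{a_r})$ equals the dimension of the degree-$j$ piece of the fat-point ideal $\mathfrak{p}_1^{j-a_1+1}\cap\cdots\cap\mathfrak{p}_r^{j-a_r+1}$ at the dual points. Your write-up takes exactly this equality as a black box (``Apply Theorem~\ref{Duality} to each of the three Hilbert-function values'') and then sketches how one might deduce the paper's main result, Theorem \ref{mainteo}, from it. Nothing in the proposal addresses the actual content of Theorem \ref{Duality}, namely \emph{why} an ideal generated by powers of linear forms has anything to do with an ideal of fat points supported at the dual points. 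That correspondence is the entire theorem, and it is not a dimension count on linear systems: it is an instance of Macaulay's apolarity. (For what it is worth, the paper does not reprove it either; it is quoted verbatim as Theorem 2 of \cite{En-Ia1995}.)

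If you want to actually prove the statement, the route is through inverse systems. In characteristic zero the differentiation pairing $[R]_j\times[R]_j\to\K$ is perfect, so $\dim_\K[R/I]_j$ equals the dimension of the degree-$j$ part of the inverse system $I^{-1}=\{F: g\circ F=0 \text{ for all } g\in I\}$, and for $I=(L_1^{a_1},\dots,L_r^{a_r})$ this is $\bigcap_i\{F\in [R]_j: L_i^{a_i}\circ F=0\}$. The single-form computation is then elementary: taking $L=x_1$, so the dual point is $P=[1:0:\cdots:0]$ with ideal $\mathfrak{p}=(x_2,\dots,x_n)$, and writing $F=\sum_k x_1^k f_k(x_2,\dots,x_n)$, one has $\partial^{a}F/\partial x_1^{a}=0$ if and only if $f_k=0$ for all $k\ge a$, i.e.\ $F\in[\mathfrak{p}^{\,j-a+1}]_j$; the hypothesis $j\ge\max\{a_i\}$ keeps all the exponents $j-a_i+1$ positive. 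Intersecting over $i$ gives the claimed identity. Separately, even read as a sketch of Theorem \ref{mainteo}, your argument is far from complete --- ``assuming the conditions imposed by the $P_i$ are generically independent'' is precisely the speciality question that the paper has to confront via Cremona transformations (Lemma \ref{Lem1}) and B\'ezout --- but that is a different theorem from the one you were asked to prove.
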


We will denote the linear system $[\mathfrak{p}_1^{a_1} \cap  \cdots \cap \mathfrak{p}_r^{a_n}]_j \subset [R]_j$ by $\mathcal{L}_{2}(j; a_1, \cdots, a_n)$ and; we will consider it as vector space and not as projective space when computing its dimension. Furthermore, we will use superscript to indicate repeated entries. For instance, $\mathcal{L}_2(j;2^3,10^2) = \mathcal{L}_2(j;2,2,2,10,10)$. 

It is well known that for any linear system one has that

\[\dim_{K}\mathcal{L}_{2}(j, a_1, \cdots, a_n) \geq \left[{j+2 \choose 2} - \sum_{i = 1}^{n} {a_i+1 \choose 2}\right]_+.\]

When the inequality is strict, we say that the linear system is {\em special}, otherwise, we say that the linear system is {\em non special}. It is a hard problem in Algebraic Geometry to  determine whether a linear system is special or not.
A linear system $\mathcal{L}_{2}(j; a_1, \cdots, a_n)$ is said to be in {\em standard position} if $j \geq a_1 +a_2  + a_3$.
   In \cite{DL-2007}, De Volder and Laface   showed that any  standard linear system $\mathcal{L}_{2}(j; a_1, \cdots, a_n)$ is non special. In this paper, using Cremona transformations, we will often we able to pass from a linear system  $\mathcal{L}_{2}(j; a_1, \cdots, a_n)$ to a linear system in standard position and then compute its dimension. Indeed, we have the following useful result for  our computations (see  \cite{Nagata}, \cite{LU}, or \cite{Dumnicky}, Theorem 3).
\begin{lem}
  \label{Lem1}
Let $n >   2$ and let $j, b_1,\cdots,b_n$ be non-negative integers with $b_1 \geq \dots \geq b_n$.  Set $m =  j - (b_1 + b_2 + b_{3})$. If $b_i + m \geq 0$ for all $i = 1,2,3$, then
\[
\dim_{\K}\mathcal{L}_2 (j; b_1,\ldots,b_n) = \dim_k \mathcal{L}_2 (j + m; b_1 +m,b_2+m,b_{3} +m , b_{4},\ldots,b_n).
\]
\end{lem}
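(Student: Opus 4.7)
The plan is to prove the equality via the standard quadratic Cremona transformation $\sigma: \mathbb{P}^2 \dashrightarrow \mathbb{P}^2$ based at the first three base points, which is the classical tool for normalizing a plane linear system with assigned base points. First I would realize
\[
\mathcal{L}_2(j; b_1, \ldots, b_n) = H^0\bigl(X,\, \mathcal{O}_X(jH - \textstyle\sum_{i=1}^n b_i E_i)\bigr),
\]
where $\pi: X \to \mathbb{P}^2$ is the blowup at general points $p_1, \ldots, p_n \in \mathbb{P}^2$ with exceptional divisors $E_1, \ldots, E_n$ and $H = \pi^* \mathcal{O}_{\mathbb{P}^2}(1)$; computing $\dim_{\K} \mathcal{L}_2(j;b_1,\dots,b_n)$ is the same as computing $h^0$ of this line bundle.

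After a projective change of coordinates I may take $p_1, p_2, p_3$ to be the fundamental points, so $\sigma(x:y:z) = (yz : xz : xy)$. This map lifts to an isomorphism between the blowup of the source at $\{p_1, p_2, p_3\}$ and the blowup of the target at the three fundamental points $\{q_1, q_2, q_3\}$; since $p_4, \ldots, p_n$ are general, they lie off the indeterminacy locus, and one may blow up both sides further at these points and their $\sigma$-images to obtain an isomorphism of the full blowups. Pullback under this isomorphism acts on the Picard group by
\begin{align*}
H &\longmapsto 2H - E_1 - E_2 - E_3, \\
E_i &\longmapsto H - E_j - E_k \quad \text{for } \{i,j,k\} = \{1,2,3\},
\end{align*}
while fixing each $E_i$ with $i \geq 4$. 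Applying this to $jH - \sum_{i=1}^n b_i E_i$ and setting $m = j - b_1 - b_2 - b_3$, a direct computation gives
\[
(j+m)H - (b_1+m)E_1 - (b_2+m)E_2 - (b_3+m)E_3 - \sum_{i=4}^n b_i E_i,
\]
which is precisely the divisor class defining $\mathcal{L}_2(j+m; b_1+m, b_2+m, b_3+m, b_4, \ldots, b_n)$.

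The hypothesis $b_i + m \geq 0$ for $i = 1,2,3$ is exactly what is needed for the transformed multiplicities to be non-negative, so that the target linear system is well-defined in the stated form. Since $\sigma$ is an isomorphism of the two blowup surfaces, it induces an isomorphism of their spaces of global sections, giving the claimed equality of dimensions. The main subtlety I would check carefully is that the points $\sigma(p_4), \ldots, \sigma(p_n)$ remain in general position in the target $\mathbb{P}^2$, so that the right-hand side genuinely computes the generic dimension of $\mathcal{L}_2(j+m; b_1+m, b_2+m, b_3+m, b_4, \ldots, b_n)$; this follows because a non-empty Zariski open condition on $(p_1,\ldots,p_n)$ is preserved under a birational automorphism of the moduli of configurations.
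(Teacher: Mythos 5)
Your proof is correct and follows exactly the approach the paper intends: the lemma is stated without proof, cited to Nagata, Laface--Ugaglia, and Dumnicki, and the surrounding text explicitly describes it as the standard quadratic Cremona transformation argument, which is what you carry out (including the correct Picard-group action and the verification that $m = j - b_1 - b_2 - b_3$ produces the stated transformed system). Your attention to the generality of the images $\sigma(p_4),\dots,\sigma(p_n)$ is the right subtlety to flag and is handled adequately by the open-condition argument.
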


We end these preliminaries with a useful application of Bezout's theorem.

\begin{rem} \label{bezout} \rm
 Assume the points $P_1,\dots,P_n$ are general.  If $2j < b_1 + \dots +b_5$ then
\[
\dim _{\K}\cL_2(j;b_1,\dots,b_n) = \dim _{\K} \cL_2 (j-2; b_1-1,\dots,b_5-1,b_6,\dots, b_n).
\]
If $j < b_1 + b_2$ then
\[
\dim _{\K }\cL_2(j;b_1,\dots,b_n) = \dim _{\K }\cL_2 (j-1; b_1-1,b_2-1,b_3,\dots, b_n).
\]
\end{rem}


\section{Main Result}

For ideals generated by powers of linear forms in $\K[x,y,z]$, the following two facts are known:

\vskip 2mm
\begin{itemize}
\item[(1)] An artinian ideal in $\K[x,y,z]$ generated by powers of arbitrary linear forms has the WLP (see \cite{Sc-Se2010}, Main Theorem).
\item[(2)] Let $I=(L_1^{a_1},\dots,L_r^{a_r})\subset R=\K[x,y,z]$ be an artinian ideal generated by powers of general linear forms and let $L$ be a general linear form. The multiplication $\times L^j : [R/I]_{t-j} \rightarrow [R/I]_t$ by $L^j$ for $j\ge 3$ does not necessarily have maximal rank (see \cite{MM-2016}, Theorems 5.1, 5.2 and 5.3).
\end{itemize}

This leaves open whether the multiplication by the square $L^2$ of a general linear form $L$ has maximal rank. Again for ideals generated by powers of general linear forms two results are known: the case of almost complete intersections and the case of uniform powers. Indeed, it holds:

\vskip 2mm
\begin{itemize}
\item[(1)] Let $L_1,\dots,L_4, L$ be five general linear forms in $R = \K[x,y,z]$. Set $I=(L_1^{a_1},\dots,L_4^{a_4})$ and $A = R/I$. Then, for each integer $j$, the multiplication map $\times L^2 : [A]_{j-2} \rightarrow [A]_j$ has maximal rank (see \cite{MMN-2012}, Proposition 4.7).
\item[(2)]  Let $L_1,\dots,L_r,L\in \K[x,y,z]$ be $r+1$ general linear forms. Let $I$ be the ideal $(L_1^{k},\dots,L_r^{k})$. Then, for each integer $j$, the multiplication map
 $\times L^2:[R/I]_{j-2}\longrightarrow [R/I]_{j}$
 has maximal rank (see \cite{MM-2016}, Theorem 4.4).
\end{itemize}

This section is entirely devoted to prove that if $I$ is generated by any powers of any number of general linear forms and $L$ is any general linear form then the multiplication by $L^2$  has maximal rank in any degree. This result solves a conjecture stated by Migliore and Mir\'{o}´-Roig (see \cite{MM-2016}, Conjecture 4.5). In fact, we have:

\begin{thm}\label{mainteo} For any artinian quotient of $\K[x,y,z]$ generated by powers of general linear forms, and for a general linear form $L$, the multiplication by $L^2$ has maximal rank in all degrees.
\end{thm}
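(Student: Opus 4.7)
The plan is to translate the statement via the Emsalem--Iarrobino duality of Theorem \ref{Duality} into an assertion about the (non-)speciality of a linear system of plane curves with assigned base points, and then to establish that non-speciality via Cremona reductions to a standard linear system, where De Volder--Laface's result applies to give the expected dimension.

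Write $I = (L_1^{a_1}, \dots, L_r^{a_r})$ and let $\mathfrak{p}_1, \dots, \mathfrak{p}_r, \mathfrak{p}$ be the points in $\PP^2$ dual to $L_1, \dots, L_r, L$. From the exact sequence displayed in Section 2, the map $\times L^2 : [R/I]_{m-2} \to [R/I]_m$ has maximal rank in degree $m$ if and only if
\[
\dim_\K [R/(I,L^2)]_m = \bigl[\dim_\K [R/I]_m - \dim_\K [R/I]_{m-2}\bigr]_+.
\]
For $m$ small (say $m < \max\{a_i, 2\}$) the statement is either vacuous or covered by the WLP of $\K[x,y,z]$ recalled in Section 3. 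For $m \geq \max\{a_i\} + 2$, Theorem \ref{Duality} rewrites each of the three dimensions in terms of linear systems, the extra generator $L^2$ contributing a base point of multiplicity $m-1$ at $\mathfrak{p}$. Thus the theorem reduces to showing that
\[
\cL := \cL_2(m;\, m-a_1+1,\,\dots,\,m-a_r+1,\, m-1)
\]
has its expected dimension, together with the analogous (and easier) statement for the two linear systems $\cL_2(m; m-a_i+1)$ and $\cL_2(m-2; m-a_i-1)$ arising from $I$ alone, which are already accessible by the techniques of \cite{MM-2016}.

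The strategy to compute $\dim_\K \cL$ is to sort the multiplicities in non-increasing order and iterate Lemma \ref{Lem1}: each Cremona step replaces $\cL_2(j; b_1, \dots, b_n)$ with $\cL_2(j + m'; b_1 + m', b_2 + m', b_3 + m', b_4, \dots, b_n)$, where $m' = j - b_1 - b_2 - b_3$. This lowers the degree and the three largest multiplicities by the same amount, driving the system toward the standard range $j \geq b_1 + b_2 + b_3$ where De Volder--Laface applies. When a Cremona move would make some multiplicity negative, one invokes Remark \ref{bezout} instead, splitting off a fixed quadric or line component, or concludes directly that the expected dimension is already zero. A short binomial computation then shows that the three expected dimensions satisfy the required equation, yielding maximal rank.

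The main obstacle will be to verify that Lemma \ref{Lem1} remains applicable throughout the reduction in the non-uniform setting: one must track whether the distinguished multiplicity $m-1$ currently sits among the three largest entries, control the order in which the various $m - a_i + 1$ enter the top three after each re-sorting, and check the positivity hypothesis $b_i + m' \geq 0$ at every stage. This bookkeeping reduces to a single sharp combinatorial inequality among $m$ and the $a_i$ (essentially the inequality acknowledged at the end of the introduction), which ensures that either the Cremona cascade terminates in a standard linear system with the correct dimension or the expected dimension is already zero. Once this inequality is established, both the uniform case of \cite{MM-2016} and the almost complete intersection case of \cite{MMN-2012} appear as special instances, and the theorem follows.
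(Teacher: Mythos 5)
Your proposal correctly identifies the toolkit the paper uses in its main computation --- Emsalem--Iarrobino duality, Cremona reductions via Lemma \ref{Lem1}, B\'ezout splitting via Remark \ref{bezout}, and De Volder--Laface for standard systems --- but it has two genuine gaps. First, the handling of degrees is wrong. The claim that for small $m$ the statement is ``vacuous or covered by the WLP'' fails: WLP controls $\times L$, not $\times L^2$, and maximal rank of $\times L$ in consecutive degrees only forces maximal rank of $\times L^2$ away from the peak of the Hilbert function (two injections compose to an injection and two surjections to a surjection, but a surjection followed by an injection need not have maximal rank). The correct use of WLP --- and the paper's first key step in its Case (i) --- is that unimodality reduces the entire problem to the single map $[A]_{p-1}\to[A]_{p+1}$ at the peak $p=\lfloor(\sum a_i-r)/(r-1)\rfloor$, rather than requiring a non-speciality verification in every degree. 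Moreover, Theorem \ref{Duality} needs $j\ge\max\{a_i\}$, so for the many nontrivial degrees $m$ with $m-2<\max\{a_i\}$ your translation to $\cL_2(m;\dots)$ is not available for $[R/I]_{m-2}$ without first discarding the generators of exponent $>m$, which changes the system; these degrees are neither vacuous nor handled by your reduction.

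Second, the bookkeeping does not collapse to ``a single sharp combinatorial inequality.'' The paper splits into two structurally different cases according to whether $a_{m+1}\le(\sum_{i=1}^{m}a_i-m)/(m-1)$ holds for all $m\ge2$. When this fails --- i.e., one exponent is much larger than the rest, which is exactly the non-uniform phenomenon the theorem addresses --- the linear-system computation is abandoned entirely: the paper adds the forms one at a time, comparing $\cB=R/(l_1^{a_1},\dots,l_m^{a_m})$ with $\cA=R/(l_1^{a_1},\dots,l_{m+1}^{a_{m+1}})$ through a commutative diagram, using surjectivity of the projections and the fact that in characteristic zero the $(q+1)$-st powers of general linear forms span $[R]_{q+1}$ to transfer injectivity or surjectivity from $\cB$ to $\cA$. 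This second argument is not a special instance of the Cremona cascade, and the inequality acknowledged in the introduction (inequality (\ref{inequality})) is only one step inside Case (i), not the whole of the combinatorics. Without an argument covering the regime where one exponent exceeds the socle degree of the subideal generated by the others, the proposal does not prove the theorem.
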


\begin{proof}
Let $I = ( l_1^{a_1},\cdots, l_r^{a_r} )$ where $l_i$ are general linear forms for $i = 1, \cdots, r$ in $\K [x,y,z]$. Note that we can assume that $r \geq 5$, because as we have just pointed out the case $r\leq 4$ was solved in \cite{MMN-2012}, Proposition 4.7, and, we will also assume that $(a_1, \cdots, a_r) \neq (k,\cdots,k)$ for any $k$, because otherwise this is proved in \cite{MM-2016}.
Set $A = R/I$.  Without loss of generality we can also suppose that $$0 < a_1 \leq a_2 \leq \cdots \leq a_r.$$

We split the proof in two cases:
\begin{itemize}
\item[(i)] For all $m \geq 2$ we have that

$$
a_{m+1} \leq \dfrac{\sum_{i=1}^{m}a_i -m}{m-1}.
$$
\item[(ii)] There exists 
$m \geq 2$ such that

$$
a_{m+1} > \dfrac{\sum_{i=1}^{m}a_i -m}{m-1}.
$$
\end{itemize}

\vskip 2mm
\noindent \underline{Case (i)} Assume that  for all $m \geq 2$ we have 

\begin{equation}\label{fundamental}
a_{m+1} \leq \dfrac{\sum_{i=1}^{m}a_i -m}{m-1}.
\end{equation}
By \cite{Sc-Se2010}, any ideal $J$ generated by powers of general linear forms has WLP and its Hilbert function is unimodal, i.e. there is a unique  integer $p$ such that $$\dim_{\K}[R/J]_{p-1}\le \dim_{\K}[R/J]_{p} \ge \dim_{\K}[R/J]_{p+1}.$$
By \cite{Sc-Se2010} Lemma 2.1 and Lemma 2.2 $A$ has WLP, so the Hilbert function of $A$ is unimodal, and even more the hypothesis (i) guarantees that $A/LA$ is minimally generated and  the socle degree of $A/LA$ is $p =\displaystyle \lfloor \frac{\sum_{i=1}^{r}a_i -r}{r-1} \rfloor $. Hence one has the following chain

$$[A]_{0} \hookrightarrow [A]_1 \hookrightarrow \cdots [A]_{p-1} \hookrightarrow [A]_{p}  \twoheadrightarrow  [A]_{p+1} \twoheadrightarrow  [A]_{p+2} \twoheadrightarrow\cdots $$

\noindent of injections and surjections. This observation narrows down our study of the multiplication map $\times L^2$ and 
 it will be  enough to check if 
 $$\times L^2: [A]_{p-1} \to [A]_{p+1},$$
 \noindent with $p :=\displaystyle \lfloor \frac{\sum_{i=1}^{r}a_i -r}{r-1} \rfloor $ has maximal rank. To see this, we are going to show that :
\begin{equation}\label{main}
\dim_{\K} [A/L^2A]_{p+1} = [\dim_{\K} [A]_{p+1} - \dim_{\K} [A]_{p-1}]_+
\end{equation}

Write $\displaystyle \sum_{i=1}^{r}a_i  = (r-1)(p+1) + b$ with $1 \leq b \leq r-1$. Note that if $b = 1$, then $p =\dfrac{\sum_{i=1}^{r}a_i - r}{r-1} $.

First, let us compute the left hand side of (\ref{main}). By Theorem \ref{Duality}, one has that

\begin{equation}\label{leftdim}
\begin{array}{rcl} \dim_{\K} [A/L^2A]_{p+1} & = & \dim_{\K}[R/( L^2, I )]_{p+1} \\
&  = & \dim_{\K}[\mathfrak{p} \cap \mathfrak{p}_1 \cap  \cdots \cap \mathfrak{p}_r]_{j} \\
& = & 
 \dim_{\K} \mathcal{L}_{2}(p+1;p, p-a_1 +2, \cdots ,p-a_r+2).
\end{array} \end{equation}

To compute the dimension of the linear system $\mathcal{L}_{2}(p+1;p, p-a_1 +2, \cdots ,p-a_r+2)$ we consider the lines $L_i$ passing through the points $\mathfrak{p}$ and $\mathfrak{p}_i$. By B\'ezout's Theorem (see remark \ref{bezout}), the line $L_i$ appears with multiplicity at least $B_i$ in the base locus of the linear system $\mathcal{L}_{2}(p+1;p, p-a_1 +2, \cdots ,p-a_r+2)$, where $B_i$ is given by

$$B_i = [p+p-a_i+2 - p -1]_+ = [p-a_i +1]_+ = p-a_i + 1$$

\noindent and the last equality follows from the fact that $a_i \leq p+1$ for all $i=1, \cdots ,r$. Hence, we have

\begin{equation}\label{leftdimdone} \begin{array}{rcl} \dim_{\K} [A/L^2A]_{p+1} & = & \dim_{\K} \mathcal{L}_{2}(p+1;p, p-a_1 +2, \cdots ,p-a_r+2) \\
& = & \dim_{\K}\mathcal{L}_{2}(b;b-1,1^r) \\ 
&  = &  \left[ {b+2\choose2} - {b\choose2} -r \right]_+ \\
&  = & [2b +1 -r]_+ .
\end{array}
\end{equation}

\vspace{0,5cm}
Now, we are going to compute the right-hand side of (\ref{main}), namely, $ [ \dim_{\K} A_{p+1} -\dim_{\K} A_{p-1}]_+ $.

We have $a_i \leq p+1$ for all $i = 1, \cdots, r$. Therefore, if there exists $j$ such that $a_{j+1} \geq p$, we will have

$$\dim_{\K}[ A]_{p-1} = \dim [R/( l_1^{a_1}, \cdots, l_r^{a_r})]_{p-1} = \dim [R/(l_1^{a_1}, \cdots, l_j^{a_j})]_{p-1}. $$

Let $s$ be the number of $a_i$'s equals to $p$ and let $t$ be the number of $a_i$'s equals to $p+1$. Then, by Theorem \ref{Duality}, we have:

$$[ \dim_{\K} A_{p+1} -\dim_{\K} A_{p-1}]_+ = $$
$$[\dim_{\K} \mathcal{L}_{2}(p+1;p-a_1 +2, \cdots p-a_j+2,2^s,1^t) - \dim_{\K} \mathcal{L}_{2}(p-1;p-a_1, \cdots, p-a_j)]_+.$$

Note that if  $a_i \leq p-1$ for all $i=1, \cdots , r$,  then $s = t = 0$ and $ j=r$ in the above equality. Furthermore, the linear system $\mathcal{L}_{2}(p+1;p-a_1 +2, \cdots, p-a_j+2,2^s,1^t)$ is in standard form if, and only if, $a_1 + a_2 + a_3 \geq 2p+5$ and the linear system $\mathcal{L}_{2}(p-1;p-a_1, \cdots, p-a_j)$ is in standard form if,  and only if, $a_1 + a_2 + a_3 \geq 2p+1$.

But, recall that  $\displaystyle \sum_{i=1}^{r}a_i  = (r-1)(p+1) + b$ with $1 \leq b \leq r-1$. So, we have:

$$a_1 +a_2 +a_3 = (r-1)(p+1) + b - \displaystyle \sum_{i=4}^{r}a_i. $$


Then for $r \geq 5$ and from the fact that $b \geq 1$, one has

\[\begin{array}{rcl} a_1 +a_2 +a_3 & \geq & (r-1)(p+1) + 1 - \displaystyle \sum_{i=4}^{r}a_i \\
&  \geq & (r-5)(p+1) + 4p +5  - \displaystyle \sum_{i=4}^{r}a_i \\
& \geq & (r-5)(p+1)  - \displaystyle \sum_{i=6}^{r}a_i  + 2p - a_4 - a_5 + 2p +5.
\end{array}
\]

Therefore, since $a_i \leq p+1$ for all $i = 1, \cdots,r$, one has that

$$a_1 +a_2 +a_3  - (2p+5)\geq 2p - a_4 - a_5 \geq -2 $$

\noindent or, equivalently

\[a_1 +a_2 +a_3  - (2p+1 ) \geq 2p - a_4 - a_5 +4 \geq 2.\]

This implies that the linear system $\mathcal{L}_{2}(p-1;p-a_1, \cdots ,p-a_j)$  is in standard form. Let us first assume that  $a_1 +a_2 +a_3  - (2p+5)\geq 0 $, then the linear system  $\mathcal{L}_{2}(p+1;p-a_1 +2, \cdots ,p-a_j+2,2^s,1^t)$ is also in standard form. Under this assumption we have

$$[\dim_{\K} \mathcal{L}_{2}(p+1;p-a_1 +2, \cdots, p-a_j+2,2^s,1^t) - \dim_{\K} \mathcal{L}_{2}(p-1;p-a_1, \cdots ,p-a_j)]_+ = $$

$$ \left[\left[\displaystyle {p+3 \choose 2} - \displaystyle \sum_{i=1}^{j} {p-a_i+3 \choose 2} - 3s -t\right]_{+} - \left[\displaystyle{p+1 \choose 2} - \displaystyle\sum_{i=1}^{j}{p -a_i +1 \choose 2}\right]_+\right]_{+}$$
 Observe that if 
 \begin{equation}\label{inequality}
 \displaystyle{p+1 \choose 2} - \displaystyle\sum_{i=1}^{j}{p -a_i +1 \choose 2} \geq 0.
 \end{equation}

\noindent then 

$$ \left[\left[\displaystyle {p+3 \choose 2} - \displaystyle \sum_{i=1}^{j} {p-a_i+3 \choose 2} - 3s -t\right]_{+} - \left[\displaystyle{p+1 \choose 2} - \displaystyle\sum_{i=1}^{j}{p -a_i +1 \choose 2}\right]_+\right]_{+} = $$

$$ = \left[\displaystyle {p+3 \choose 2} - \displaystyle \sum_{i=1}^{j} {p-a_i+3 \choose 2} - 3s -t - \displaystyle{p+1 \choose 2} - \displaystyle\sum_{i=1}^{j}{p -a_i +1 \choose 2}\right]_+ .$$

\noindent Note that 
 $$\displaystyle \sum_{i=1}^{r}{p -a_i +1 \choose 2} = r {p+1 \choose 2}- \dfrac{1}{2}(2p+1)((r-1)(p+1)+b) + \dfrac{1}{2} \displaystyle \sum_{i =1}^{r}a_i^2.$$

 Therefore, we have

 \[\displaystyle\sum_{i=1}^{r}{p -a_i +1 \choose 2}- \displaystyle{p+1 \choose 2}  \leq 0\]

 \[\Leftrightarrow (r-1) {p+1 \choose 2}- \dfrac{1}{2}(2p+1)((r-1)(p+1)+b) + \dfrac{1}{2} \displaystyle \sum_{i =1}^{r}a_i^2 \leq 0\]

 \begin{equation}\label{inequality2}
 \Leftrightarrow \sum_{i =1}^{r}a_i^2 \leq (r-1)(p+1)^2 + b(2p+1).
 \end{equation}
 
 Among all sequences $(a_1, \cdots, a_r) \neq (k, \cdots, k)$ such that $a_i \leq p+1$, and $\sum_{i = 1}^{r} a_i = (r-1)(p+1) + b$ the sequence $(b, p+1, \cdots, p+1)$ maximize $\sum_{i=1}^{r}a_{i}^2$. So we have:

 $$\displaystyle \sum_{i =1}^{r}a_i^2 \leq (r-1)(p+1)^2 + b^2.$$

 Hence, to see the inequality (\ref{inequality2}) it is enough to see that:

 $$(r-1)(p+1)^2 + b^2 - (r-1)(p+1)^2 - b(2p+1) \leq 0 $$

 $$ \Leftrightarrow b^2 + b(2p+1)\leq 0$$
 $$ \Leftrightarrow b- (2p+1) \leq 0.$$

But from the fact that $\displaystyle \sum_{i=1}^{r}a_i = (r-1)(p+1) + b \leq r(p+1) $ we have that $b \leq p+1$, therefore $b- (2p+1) \leq 0.$

Then
\vskip 2mm
$[\dim_{\K} \mathcal{L}_{2}(p+1;p-a_1 +2, \cdots, p-a_j+2,2^s,1^t) - \dim_{\K} \mathcal{L}_{2}(p-1;p-a_1, \cdots, p-a_j)]_+ = $

$= \left[\displaystyle {p+3 \choose 2} - \displaystyle \sum_{i=1}^{j} {p-a_i+3 \choose 2} - 3s -t - \displaystyle{p+1 \choose 2} - \displaystyle\sum_{i=1}^{j}{p -a_i +1 \choose 2}\right]_{+} = $

$=\left[2p+3 -\displaystyle\sum _{i=1}^{j}(2(p-a_i)+3) - 3s -t\right]_+ = $

$=[2b+1-r]_+.$

Hence, if $a_1 +a_2 +a_3 - 2p - 5 \geq 0$ we have proved the equality
 $$\dim_{\K} [A/L^2A]_{p+1} = [2b +1 -r]_+= [\dim_{\K} [A]_{p+1} - \dim_{\K} [A]_{p-1}]_+ .$$ 

Now assume $a_1 + a_2 +a_3 -2p -5 = -1$. So, $2p - a_4 - a_5 =-1$ or $2p - a_4 - a_5 = -2 $. In the first case we have that $2p - a_4 - a_5 =-1$ which implies   $a_4 = p$ and  $a_i = p+1$ for all $i \geq 5$. In addition, one has  $b = 1$, which implies that the left-hand side of the equality (\ref{main}) is $0$. To compute the right-hand side of (\ref{main}) we use Lemma \ref{Lem1} and we get

\[ \dim_{\K} \mathcal{L}_{2}(p+1;p-a_1 +2, \cdots, p-a_3+2,2,1^{r-4}) =  \dim_{\K} \mathcal{L}_{2}(p;p-a_1+1, \cdots p-a_3+1,2,1^{r-4}). \]

The second linear system is in standard form.  So one has:

\[\dim_{\K} \mathcal{L}_{2}(p;p-a_1+1, \cdots, p-a_3+1,2,1^{r-4}) = \left[{p+2 \choose 2} - \displaystyle \sum_{i=1}^3 {p-a_i+2 \choose 2} -3 -(r-4) \right]_+ \]

\noindent and

\[\dim_{\K} \mathcal{L}_{2}(p-1;p-a_1, p-a_2, p-a_3) = \left[{p+1 \choose 2} - \displaystyle \sum_{i=1}^3 {p-a_i+1 \choose 2}\right]_+ .\]

Since 

\[ \left[\left[{p+2 \choose 2} - \displaystyle \sum_{i=1}^3 {p-a_i+2 \choose 2} -3 -(r-4) \right]_+ - \left[{p+1 \choose 2} - \displaystyle \sum_{i=1}^3 {p-a_i+1 \choose 2}\right]_+ \right]_+  \leq\]

\[\label{equals-1} \left[{p+2 \choose 2} - \displaystyle \sum_{i=1}^3 {p-a_i+2 \choose 2} -3 -(r-4)  - {p+1 \choose 2} - \displaystyle \sum_{i=1}^3 {p-a_i+1 \choose 2} \right]_+ \]

\[ = [-2b-1 +2p +4 -r]_+ = [3-r]_+ = 0.\]

If $2p - a_4 - a_5 = -2 $ then $a_i = p+1$ for all $i \geq 4$. In addition, one has that $b = 2$ which implies that the left-hand side of the equality (\ref{main}) is $0$. To compute the right-hand side of (\ref{main}) we use Lemma \ref{Lem1} and we get 

\[ \dim_{\K} \mathcal{L}_{2}(p+1;p-a_1 +2, \cdots p-a_3+2,1^{r-3}) =  \dim_{\K} \mathcal{L}_{2}(p;p-a_1+1, \cdots p-a_3+1,1^{r-3}). \]

Again the second linear system is in standard form, and therefore one has that 
\[\dim_{\K} \mathcal{L}_{2}(p;p-a_1+1, \cdots, p-a_3+1,1^{r-3}) = \left[{p+2 \choose 2} - \displaystyle \sum_{i=1}^3 {p-a_i+2 \choose 2}  -(r-3) \right]_+ \]

Furthemore, from this, we have that

\[[\dim_{\K} [A]_{p+1} - \dim_{\K} [A]_{p-1}]_+  = \] 

\[ = \left[ \left[{p+2 \choose 2} - \displaystyle \sum_{i=1}^3 {p-a_i+2 \choose 2}  -(r-3) \right]_+ - \left[{p+1 \choose 2} - \sum_{i=1}^{3}{p-a_i+1 \choose 2}\right]_+  \right]_+ \leq  \]

\[ = \left[ {p+2 \choose 2} - \displaystyle \sum_{i=1}^3 {p-a_i+2 \choose 2}  -(r-3)  - {p+1 \choose 2} - \sum_{i=1}^{3}{p-a_i+1 \choose 2}  \right]_+ =   \] 

\[= [5-r]_+=0.\]

Therefore both sides of (\ref{main}) coincide. Finally it remains to see that what happens for the case $a_1 + a_2 + a_3 - 2p -5 = -2$. In this case we have that $a_i = p+1$
 for all $i \geq 4$. Again we have that $b = 1$ and applying  Lemma \ref{Lem1} we obtain:

 \[ \dim_{\K} \mathcal{L}_{2}(p+1;p-a_1 +2, \cdots, p-a_3+2,1^{r-3}) =  \dim_{\K} \mathcal{L}_{2}(p-1;p-a_1, \cdots, p-a_3,1^{r-3}). \]

With the same argument as before, one sees that :

\[ [\dim_{\K} \mathcal{L}_{2}(p+1;p-a_1 +2, \cdots, p-a_j+2,1^{r-3}) - \dim_{\K} \mathcal{L}_{2}(p-1;p-a_1, p-a_2, p-a_3]_+ \]
\[  = [\dim_{\K} \mathcal{L}_{2}(p+1;p-a_1 +2, \cdots, p-a_j+2,1^{r-3}) - \dim_{\K} \mathcal{L}_{2}(p-1;p-a_1, p-a_2, p-a_3]_+ =\]
\[ = [3-r]_+ = 0\]

\noindent and both sides of (\ref{main}) coincide which concludes the proof of the theorem in case (i). That is if  $a_{m+1} \leq \frac{\sum_{i=1}^{m}a_i -m}{m-1}$ for all $m \geq 2$, then multiplication by $L^2$ has maximal rank in all degrees.

\vskip 4mm
\noindent \underline{Case (ii)}
 Now, suppose that (\ref{fundamental}) is not satisfied, and let $m \geq 2$ be the least integer $j \geq 2$ such that $ a_{j+1} > \frac{\sum_{i=1}^{j}a_i -j}{j-1}$. Consider the ideals $\mathfrak{a} = ( l_1^{a_1}, \cdots, l_{m+1}^{a_{m+1}})$ and $\mathfrak{b} =  ( l_1^{a_1}, \cdots, l_{m}^{a_{m}} )$. Let $\mathcal{A} = R/\mathfrak{a}$, and $\mathcal{B} = R/\mathfrak{b}$. By the  case (i), $\times L^2: [\mathcal{B}]_{j-2} \to [\mathcal{B}]_j$ has maximal rank in all degrees $j$. We define $q  := \displaystyle \lfloor \dfrac{\sum_{i=1}^{m}a_i -m}{m-1} \rfloor$ and we consider the commutative diagram:

\begin{equation}
\begin{array}{cccc}
[\mathcal{B}]_{j-2} &  \stackrel{\times L^2}{\longrightarrow} & [\mathcal{B}]_{j}  \\
\downarrow &  & \downarrow  \\
\left[\mathcal{A}\right]_{j-2} &  \stackrel{\times L^2}{\longrightarrow} & [\mathcal{A}]_{j}\\
\end{array}
\end{equation}

\vskip 2mm

We have the following possibilities:

\vskip 2mm
\begin{itemize}
\item[a)] For $j < a_{m+1}$, one has that $[\mathcal{B}]_{j-2} \simeq [\mathcal{A}]_{j-2}$ $[\mathcal{B}]_{j} \simeq [\mathcal{A}]_j$. Therefore $\times L^{2} : [\mathcal{A}]_{j-2} \to [\mathcal{A}]_{j}$ has maximal rank.

\vskip 2mm
\item[b)] If $j > a_{m+1}$, since $q \leq \frac{\sum_{i=1}^{m}a_i -m}{m-1} < a_{m+1} $, one has that $j \geq q+2$. Therefore, $\times L^{2} : [\mathcal{B}]_{j-2} \to [\mathcal{B}]_{j}$ is surjective. But if the top row map  in the above commutative diagram is surjective then the bottom row map is surjective as well. So, we conclude that  $\times L^{2} : [\mathcal{A}]_{j-2} \to [\mathcal{A}]_{j}$ has maximal rank.

\vskip 2mm
\item[c)] Assume $j = a_{m+1}$. Since $a_{m+1} \geq q + 1$, we have two possibilities:  $a_{m+1} \geq q+2$ or $a_{m+1} = q+1$. If $a_{m+1} \geq q+2$ we argue as in case (b) and we get that  $\times L^{2} : [\mathcal{A}]_{j-2} \to [\mathcal{A}]_{j}$ is surjective. If $a_{m+1} = q+1$ we have the following commutative diagram:

\begin{equation}
\begin{array}{cccccc}
[\mathcal{B}]_{q-1} & \stackrel{\times L^2}{\longrightarrow} & [\mathcal{B}]_{q+1}  \\
\pi _{q-1}{\downarrow } &  & \downarrow \pi _{q+1} \\
 \left[\mathcal{A}\right]_{q-1} & \stackrel{\times L^2}{\longrightarrow} & [\mathcal{A}]_{q+1} 
\end{array}
\end{equation}

Where $\pi_{i}$ is the natural projection for $ i = q-1,q+1$ and hence it is surjective. If the upper line is surjective, $\times L^{2} : [\mathcal{A}]_{j-2} \to [\mathcal{A}]_{j}$ is also surjective by the commutativity of the above square. Otherwise, if the top map is injective but not surjective, one has that $(l_1^{a_1}, \cdots, \l_{m}^{a_m}, L^2)_{q+1}$ is not all $R_{q+1}$. Since in characteristic zero, $(p+1)-$th powers of general linear forms generate $[R]_{q+1}$,  $l_{m+1}^{a_{q+1}}$ must be outside the image of $[\mathcal{B}]_{q-1}$ in $[\mathcal{B}]_{q+1}$, so the map $\times L^{2} : [\mathcal{A}]_{q-1} \to [\mathcal{A}]_{q+1}$ is injective since $\times L^{2} : [\mathcal{B}]_{q-1} \to [\mathcal{B}]_{q+1}$ is injective.
\end{itemize}

Since for all $ m+1 \leq t \leq r$, $a_{t} \geq a_{m+1} \geq q+1$, and the socle degree of $R/(l_1^{a_1}, \cdots, l_{t}^{a_t}, L)$, which is the peak of the Hilbert function of $R/(l_1^{a_1}, \cdots, l_{t}^{a_t})$, is smaller or equal than $q$, repeating the same argument as before, we have that $\times L^2: [A]_{j-2} \to [A]_j$ has maximal rank for all  integers $j$ which concludes the proof of the Theorem \ref{mainteo}.

\end{proof}

\end{document}